\newcommand{\R}{{\mathbb{R}}}
\renewcommand{\S}{{\mathbb{S}}}
\newcommand{\E}{{\mathbb{E}}}
\newcommand{\T}{{\mathcal{T}}}
\newcommand{\conv}{{\rm conv}}
\newcommand{\ext}{{\rm ext}}
\renewcommand{\phi}{{\boldsymbol{\varphi}}}
\newtheorem{theorem}{Theorem}
\newtheorem{lemma}{Lemma}
\theoremstyle{remark}
\title{Spherical coverings and X-raying convex bodies of constant width}
\author{A.\ Bondarenko}
\address{Department of Mathematical Sciences, Norwegian University of Science and Technology, NO-7491 Trondheim, Norway}
\email{andriybond@gmail.com}
\thanks{The first author was supported in part by Grant 275113 of the Research Council of Norway.}
\author{A.\ Prymak}
\address{Department of Mathematics, University of Manitoba, Winnipeg, MB, R3T 2N2, Canada}
\email{prymak@gmail.com}
\thanks{The second author was supported by NSERC of Canada Discovery Grant RGPIN-2020-05357.}
\author{D.\ Radchenko}
\address{ETH Zurich, Mathematics Department, Zurich 8092, Switzerland}
\email{danradchenko@gmail.com}
\keywords{Spherical covering radius, X-ray problem, illumination problem, convex bodies of constant width}
\subjclass[2010]{Primary 52C17; Secondary 52A20, 52A40, 52C35.}
\begin{document}

	\begin{abstract}
		 K.~Bezdek and Gy.~Kiss showed that existence of origin-symmetric coverings of unit sphere in $\E^n$ by at most $2^n$ congruent spherical caps with radius not exceeding $\arccos\sqrt{\frac{n-1}{2n}}$ implies the $X$-ray conjecture and the illumination conjecture for convex bodies of constant width in $\E^n$, and constructed such coverings for $4\le n\le 6$. Here we give such constructions with fewer than $2^n$ caps for $5\le n\le 15$. 
		 
		 For the illumination number of any convex body of constant width in $\E^n$, O.~Schramm proved an upper estimate with exponential growth of order $(3/2)^{n/2}$. In particular, that estimate is less than $3\cdot 2^{n-2}$ for $n\ge 16$, confirming the above mentioned conjectures for the class of convex bodies of constant width. Thus, our result settles the outstanding cases $7\le n\le 15$. 
		 
		 We also show how to calculate the covering radius of a given discrete point set on the sphere efficiently on a computer.
	\end{abstract}

\maketitle

\section{Introduction}

The problem of packing congruent spherical caps on a sphere has received considerable attention since the centers of the caps form spherical codes which have many applications~\cite{Co-Sl}. The corresponding covering problem is not that well studied. The general results of Rogers~\cite{Ro1, Ro2} have been improved in this context by B\"or\"oczky and Wintsche~\cite{Bo-Wi} and later by Dumer~\cite{Du} and Naszodi~\cite{Na}. All these results specifically target higher dimensions, and under-perform in the lower dimensions compared to concrete constructions of covering sets derived from lattices or from other regular or symmetric arrangements of points. Motivated by applications in certain problems from convex geometry considered by Bezdek and Kiss~\cite{Be-Ki}, our goal in this work is to construct several spherical coverings with some additional properties such as origin-symmetry and a specific covering radius. Our constructions, as well as the method for calculation of the covering radius, may also be of independent interest. Now let us describe the corresponding geometric problems.

A convex body in the $n$-dimensional Euclidean space $\E^n$ is a convex compact set with non-empty interior. A point $x$ on the boundary of a convex body $K$ in $\E^n$ is illuminated along a direction $\xi\in\S^{n-1}$ (where $\S^{n-1}$ is the unit sphere in $\E^n$) if the ray $\{x+t\xi:t\ge0 \}$ intersects the interior of $K$. A convex body $K$ is illuminated along a set of directions $E\subset\S^{n-1}$ if for any point of the boundary of $K$ there is a direction $\xi\in E$ such that this point is illuminated by $\xi$. The illumination number $I(K)$ is defined as the smallest cardinality of a set of directions illuminating $K$. The well-known illumination conjecture is that for any convex body $K\subset\E^n$ one has $I(K)\le 2^n$. Note that the illumination number of an $n$-cube is $2^n$. An equivalent formulation of the illumination conjecture is that any convex body $K\subset\E^n$ can be covered by at most $2^n$ smaller homothetic copies of $K$. For a survey on these conjectures, also known as (Levi-)Hadwiger conjecture or Gohberhg-Markus covering conjecture, see~\cite{Be-Kh} and references therein; for recent results in the asymptotic case see~\cite{HSTV}; for recent results in the low-dimensional case see~\cite{Pr-Sh}; for a computer-based approach see~\cite{Zong}.

A related concept to illumination is that of X-raying a convex body introduced by Soltan. A point $x\in K$, where $K\subset\E^n$ is a convex body, is X-rayed along a direction $\xi\in\S^{n-1}$ if the line $\{x+t\xi: t\in\R \}$ intersects the interior of $K$. $K$ is X-rayed by $E\subset\S^{n-1}$ if for every point $x\in K$ there is a direction $\xi\in E$ such that $x$ is X-rayed along $\xi$. The X-ray number $X(K)$ is the smallest cardinality of a set of directions X-raying $K$. X-raying conjecture by Bezdek and Zamfirescu is that $X(K)\le 3\cdot 2^{n-2} $ for any convex body $K\subset\E^n$. An example achieving the bound is the convex hull of the vertices of an $n$-cube with one $(n-2)$-dimensional face removed. The reader can refer to~\cite{Be-Ki} for further details. 

The connection between the X-raying and the illumination problems is not hard to observe: one always has $X(K)\le I(K)\le 2X(K)$ for any convex body $K$.

Convex body has constant width if its projection onto any line has length independent of the choice of the direction of the line. This class of convex bodies plays a very important role in convex geometry and other areas of mathematics, see, e.g. \cite{constwidthbook} for a comprehensive exposition. We define $X_n^w$ and $I_n^w$ as the largest values of $X(W)$ and $I(W)$, respectively, where $W$ varies over all convex bodies of constant width in $\E^n$. A natural problem considered by Bezdek and Kiss in~\cite{Be-Ki} is to confirm X-raying and illumination conjectures for the class of convex bodies of constant width, e.g. to establish $X_n^w\le 2^{n-1}$. 

Using an interesting probabilistic argument, O.~Schramm proved in~\cite{Sc} that asymptotically $I_n^w < n^{1.5+o(1)} (3/2)^{n/2}$ as $n\to\infty$. He provided an explicit estimate for all $n$, namely, (see~\cite{Sc}*{p.~188})
\begin{equation}\label{eqn:sc}
	I_n^w < 1+4n\sqrt{\pi n/3} \ln(13+16n) \left(\frac32\right)^{n/2}.
\end{equation}
If $n\ge 16$, the right-hand-side of~\eqref{eqn:sc} is less than $3\cdot 2^{n-2}$ and we always have $X_n^w\le I_n^w$, so~\eqref{eqn:sc} confirms the X-raying and illumination conjectures for the class of convex bodies of constant width and dimensions $n\ge 16$. (We remark that the simpler estimate than~\eqref{eqn:sc} given in \cite{Sc}*{Th.~1} is not sufficient for $n=16$, and, on the other hand, further fine-tuning of parameters and constants in the proof in~\cite{Sc} does not seem to allow to confirm the conjectures of our interest for $n=15$.)

Returning to small dimensions, for $n\le 6$ the inequality $X_n^w\le 2^{n-1}$ was confirmed by Bezdek and Kiss in~\cite{Be-Ki} by reduction to a specific covering problem on the sphere. Let us explicitly formulate this reduction which is valid in all dimensions. For a finite set $A\subset\S^{n-1}$, the covering radius of $A$ is the smallest $r>0$ such that the union of spherical balls of radii $r$ centered at the points of $A$ is $\S^{n-1}$. $A$ is origin-symmetric if $-A=A$. Let $w_n$ denote the smallest cardinality of an origin-symmetric set $A\subset\S^{n-1}$ with covering radius not exceeding $\arccos\sqrt{\tfrac{n-1}{2n}}$.

\begin{lemma}[\cite{Be-Ki}*{Lemma~3.1}]\label{lem:reduction}
	$X^w_n\le \frac12 w_n$.
\end{lemma}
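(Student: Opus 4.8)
The plan is to fix a convex body $W\subset\E^n$ of constant width, rescale so that its width is $1$, and produce an origin-symmetric set $E\subset\S^{n-1}$ with $|E|=w_n$ and covering radius at most $\arccos\sqrt{\tfrac{n-1}{2n}}$ that X-rays $W$. Since $\xi$ and $-\xi$ determine the same line, such an $E$ gives $X(W)\le\tfrac12|E|=\tfrac12 w_n$, and as $W$ is an arbitrary body of constant width this yields $X^w_n\le\tfrac12 w_n$. Only boundary points of $W$ need attention, because a line through an interior point already meets $\intr W$. For $x\in\partial W$ let $T_xW$ denote the tangent cone; since $W$ has nonempty interior, $T_xW$ is a full-dimensional pointed convex cone, and $x$ is X-rayed along $\xi$ exactly when $\xi\in\intr T_xW$ or $-\xi\in\intr T_xW$, equivalently when $\langle\xi,u\rangle$ is nonzero and of one sign as $u$ runs over the unit outer normals of $W$ at $x$.

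The first substantive ingredient is the constant-width geometry. If $u$ is a unit outer normal of $W$ at $x$, the opposite point $x-u$ (the touching point of the parallel supporting hyperplane) again lies on $\partial W$; applying this to two normals $u_1,u_2$ at the same $x$ gives $|u_1-u_2|=|(x-u_1)-(x-u_2)|\le\diam W=1$, hence $\langle u_1,u_2\rangle\ge\tfrac12$. So the unit outer normals at $x$ are pairwise at angular distance $\le 60^\circ$, and by a spherical Jung-type estimate — sharp for $n$ unit vectors with pairwise inner product exactly $\tfrac12$ — they lie in a spherical cap of angular radius at most $\beta:=\arccos\sqrt{\tfrac{n+1}{2n}}$ centred at some $c_x\in\S^{n-1}$. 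The constant $\beta$ drops out of the computation that for such $n$ vectors the normalised centroid $g/|g|$ satisfies $\langle g/|g|,u_i\rangle=|g|=\sqrt{\tfrac{n+1}{2n}}$.

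Now comes the key arithmetic: with $r:=\arccos\sqrt{\tfrac{n-1}{2n}}$ one has $\cos^2\beta+\cos^2 r=\tfrac{n+1}{2n}+\tfrac{n-1}{2n}=1$, so $\tfrac\pi2-\beta=r$. Consequently, if $\angle(\xi,-c_x)<r$ then for every unit outer normal $u$ at $x$ the triangle inequality on $\S^{n-1}$ gives $\angle(\xi,u)\ge\angle(-c_x,u)-\angle(-c_x,\xi)>(\pi-\beta)-(\tfrac\pi2-\beta)=\tfrac\pi2$, hence $\langle\xi,u\rangle<0$, so $\xi\in\intr T_xW$ and $x$ is X-rayed along $\xi$; symmetrically $\angle(\xi,c_x)<r$ works via $-\xi$. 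Thus the set of directions X-raying $x$ contains the open double cap of angular radius $r$ about $\pm c_x$, and in fact a strictly larger set unless the normal cone at $x$ is the extremal cone $\mathrm{cone}(u_1,\dots,u_n)$ with $\langle u_i,u_j\rangle=\tfrac12$ (which forces $\{x\}\cup\{x-u_i\}$ to be a regular $n$-simplex of edge $1$ inscribed in $W$). Taking $E$ to be an origin-symmetric set attaining $w_n$ with covering radius $\le r$, for every $x$ there is $\xi\in E$ within angular distance $r$ of $-c_x$ (or of $c_x$), and $\xi$ X-rays $x$ as soon as that distance is strictly less than $r$.

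The genuinely delicate point — and what I expect to be the main obstacle — is the borderline case of equality, where the covering is tight exactly at $c_x$ and the normal cone at $x$ is the extremal simplex cone: there the closed $r$-cap about $-c_x$ meets $\partial T_xW$ only in the $n$ points lying on the geodesics from $-c_x$ toward $u_1,\dots,u_n$, so an element of $E$ sitting precisely at one of these would fail to X-ray $x$. This is a codimension-$\ge 1$ coincidence, and it is removed either by choosing $E$ with covering radius strictly below $r$ (which can still be done with $w_n$ points for the relevant $n$, e.g.\ the square already beats the bound when $n=2$), or by applying a generic rotation to a fixed optimal covering set so as to avoid the finitely many bad coincidences arising from the exceptional boundary points of $W$. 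Once this borderline is handled, the chosen $E$ X-rays all of $\partial W$, and the lemma follows.
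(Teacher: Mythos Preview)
The paper does not give its own proof of this lemma: it is quoted verbatim from Bezdek--Kiss~\cite{Be-Ki}, so there is nothing to compare against here. Your reconstruction is faithful to the Bezdek--Kiss argument: the two ingredients --- the constant-width inequality $\langle u_1,u_2\rangle\ge\tfrac12$ for outer normals at a common boundary point, the spherical Jung bound giving a cap of radius $\beta=\arccos\sqrt{\tfrac{n+1}{2n}}$ around the normal directions, and the complementarity $r+\beta=\tfrac\pi2$ --- are exactly the mechanism behind the result, and your derivation of them is correct.

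Where your write-up is not yet a proof is precisely the equality case you flag. Neither of the two fixes you propose is justified as written. Fix~1 (take $E$ with covering radius strictly below $r$) is not available in general: the definition of $w_n$ allows covering radius equal to $r$, and there is no a~priori reason why the minimum cardinality cannot be attained only by configurations that are tight --- indeed, in the paper's own Table~1 the $n=5$ construction has covering radius exactly $r$. Fix~2 (generic rotation) leans on the claim that a body of constant width has only finitely many boundary points whose normal cone realises the Jung extremum; you do not prove this, and even granting it, the argument that a generic rotation of $E$ avoids all bad incidences simultaneously needs to be spelled out (you must rule out, for every Jung-extremal $x$ at once, that \emph{every} $\xi\in RE$ within distance $r$ of $\pm c_x$ lands on one of the $n$ bad geodesic points). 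So the borderline case remains a genuine gap in your sketch, even though you correctly localise where it lives.
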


Let us briefly describe two main ingredients in the proof of this lemma; full details can be found in~\cite{Be-Ki}. The key concept is that of the Gauss image of a face (intersection of the boundary with a supporting hyperplane) of a convex body, which is the set of outer unit normal vectors of all supporting hyperplanes containing the face, see also~\cite{constwidthbook}*{p.~35} for the simpler case of smooth boundary. If the Gauss image (which is a subset of the unit sphere) of any face of a convex body can be covered by an appropriate spherical cap, then an estimate on the $X$-ray number of the body follows, as established in~\cite{Be-Ki}*{Lemma~2.4}. The second ingredient, which was used in~\cite{Sc} as well, is a nice geometric property of convex bodies of constant width stating that the angle between any two outer unit normal vectors of supporting hyperplanes at the same point of the boundary is at most $\pi/3$. The value $\arccos\sqrt{\tfrac{n-1}{2n}}$ arises as the complementary angle to the circumradius of a regular $(n-1)$-dimensional spherical simplex of edge length $\pi/3$.

It was shown in~\cite{Be-Ki} that $w_4\le 12$ and $w_n\le 2^n$ for $n=5,6$ and was asked if this inequality can be extended to $7\le n\le 15$. We show by explicit construction that $w_n< 2^n$ for $5\le n\le 15$, and thus completely confirm the X-raying and the illumination conjectures for the class of convex bodies of constant width in any dimension. X-raying problem is connected to a theorem of Danzer and Gr\"unbaum~\cite{Da} on antipodal convex polytopes, see~\cite{Be-Ki}*{Sec.~4}. Further, X-raying problem has found applications in approximation theory~\cite{Da-Pr}*{Sec.~7} where explicit upper bounds on the number of directions required for X-raying are of interest.

Our main result is the following theorem.

\begin{theorem}\label{thm:main}
	$w_5\le 30$, $w_6\le44$, $w_7\le112$, $w_8\le 240$, $w_9\le 470$, $w_{10}\le 692$, $w_{11}\le 2024$, $w_{12}\le 3832$, $w_{13}\le 7074$, $w_{14}\le 11132$, $w_{15}\le 16442$.  
\end{theorem}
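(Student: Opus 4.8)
The plan is to prove each of the eleven inequalities by exhibiting, for the corresponding dimension $n$, an explicit finite origin-symmetric set $A_n\subset\S^{n-1}$ whose covering radius does not exceed $r_n:=\arccos\sqrt{\tfrac{n-1}{2n}}$ and whose cardinality equals the stated bound; by the definition of $w_n$ this yields $w_n\le|A_n|$ (and, via \cref{lem:reduction}, the announced consequences for $X_n^w$ and $I_n^w$). Throughout, the condition ``covering radius at most $r_n$'' will be used in the dual form $\max_{a\in A_n}\langle x,a\rangle\ge\sqrt{\tfrac{n-1}{2n}}$ for every $x\in\S^{n-1}$.

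For the constructions I would first look, in the lowest dimensions, for a single highly symmetric configuration: the radial projections onto $\S^{n-1}$ of the minimal vectors of a well-chosen lattice or of the roots of a root system (for instance the $30$ roots of $A_5$ in dimension $5$ and the $240$ roots of $E_8$ in dimension $8$), possibly enlarged by the vertices of a cross-polytope or of a demicube to reach the needed cardinality; such sets are automatically origin-symmetric. In the higher dimensions, where no single lattice simultaneously has cardinality below $2^n$ and small enough covering radius, I would use a recursive ``slicing'' construction: split the coordinates of $\E^n$ into two blocks, view $\S^{n-1}$ as the spherical join of the coordinate spheres $\S^{k-1}$ and $\S^{n-k-1}$, place suitably scaled copies of good coverings of these lower-dimensional spheres at a carefully chosen finite set of latitudes, and optimize the latitudes and the cap radii so that the resulting caps still cover all of $\S^{n-1}$; symmetrizing the latitude set in the join parameter keeps the configuration origin-symmetric, and the counts are chosen to stay strictly below $2^n$.

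Verification of the covering radius then splits accordingly. For the symmetric lattice/root-system configurations the isometry group of $A_n$ is large enough that the point $x\in\S^{n-1}$ furthest from $A_n$ must lie among a short list of candidates — essentially the circumcenters of the Delaunay cells of $A_n$ up to symmetry — which can be enumerated and checked by hand or by a brief exact computation. For the recursive configurations the inequality $\max_{a}\langle x,a\rangle\ge\sqrt{\tfrac{n-1}{2n}}$ reduces, through the join parametrization, to a one-variable inequality in the latitude of $x$ together with the covering radii of the lower-dimensional pieces, which are known inductively. In the remaining cases I would invoke the algorithm developed in a later section of the paper, which computes a rigorous upper bound for the covering radius of a discrete set on $\S^{n-1}$ efficiently, and run it on $A_n$.

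The main obstacle is the \emph{certified} verification of the covering-radius bound in the larger dimensions. The textbook description via circumradii of Delaunay cells is computationally hopeless for sets of thousands of points in dimension up to $15$, so the key is the efficient method of the paper — a branch-and-bound search over spherical simplices together with a Lipschitz/curvature estimate that lets a finite evaluation certify the global maximum — and the constructions must be designed so that this verification actually goes through with the covering radius comfortably below $r_n$ while the cardinality stays under $2^n$. Balancing these two competing requirements, which are nearly tight for $n$ around $9$--$12$, is where most of the effort lies; the remaining steps (origin-symmetry, counting, and the passage through \cref{lem:reduction}) are routine.
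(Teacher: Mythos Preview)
Your high-level strategy --- exhibit explicit origin-symmetric point sets and certify their covering radii --- is the paper's, and the $E_8$ roots for $n=8$ are indeed the construction used there. But what you have written is a plan, not a proof: except for the hints at $n=5,8$ you never actually produce $A_n$, and the ``recursive slicing via spherical join'' scheme you propose for the larger $n$ is speculative. The paper does nothing of the sort. Every $A_n$, $5\le n\le 15$, is of the \emph{same} type: the orbit under coordinate permutations and central symmetry of a short list of explicit vectors (e.g.\ for $n=12$: $(2\sqrt3,0^{11})$, $(2^3,0^9)$, $(2^2,-2,0^9)$, and $(1^{12-2k},(-1)^{2k})$ for $k=0,\dots,3$). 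The entire content of the proof is this table of generators together with the computed covering radii; without supplying such a list your argument has no content for $n\ge 9$, and there is no evidence a join construction would land on the precise cardinalities $470,692,\dots,16442$ while staying below $r_n$.

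You also misdescribe the verification step. It is not a branch-and-bound or Lipschitz search. \cref{lem:cov radius} shows that the covering radius of $A$ equals $\arccos\bigl((\max_{v\in\ext(K^\circ)}\|v\|)^{-1}\bigr)$ with $K=\conv(A)$, so one simply enumerates the vertices of the polar polytope $K^\circ$ (restricted to a fundamental chamber of the permutation/sign symmetry) using standard polytope software --- exactly over $\mathbb Q$ or a quadratic field for $5\le n\le 10$, in double precision for $11\le n\le 15$. Your ``Delaunay-cell circumcenters'' are in fact the radial projections of these polar vertices, so the geometric intuition is right, but it is the polar-duality reformulation that turns it into a routine computation rather than the ``computationally hopeless'' task you anticipate.
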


Our constructions started from an observation that the (normalized) minimal norm vectors of the  $E_8$ lattice (see, e.g.~\cite{Co-Sl}*{Sect.~4.8.1, p.~120}) settle the problem for $n=8$. We further explored various origin-symmetric systems of vectors which are invariant under permutations of coordinates and were able to solve the problem for the outstanding dimensions and also improve the known results from~\cite{Be-Ki} for $n=5,6$. 

An important part of the proof which can be of independent interest is an efficient procedure for computation of covering radius of a given point system which is based on the computation of the polar of a convex polytope, see \cref{sec:numerical computation}. SageMath (\cite{sagemath}) code we used for computations can be found in the Appendix of the pre-print~\cite{self-pre-print} of this article.

\section{Computation of covering radius}\label{sec:numerical computation}

Recall that the polar of a convex body (convex compact set with non-empty interior) $K\subset\E^n$ containing the origin is defined as $K^\circ=\{x\in\E^n: \langle x, y\rangle\le1\ \forall y\in K \}$, where $\langle \cdot,\cdot\rangle$ is the canonical Euclidean scalar product. By $\conv(A)$ we denote the convex hull of $A$, by $\|\cdot\|$ the Euclidean norm, and by $\ext(K)$ we denote the set of extreme points of a convex set $K$. In the case $K$ is a polytope, $\ext(K)$ is the set of its vertices.

\begin{lemma}\label{lem:cov radius}
	Suppose a finite subset $A\subset\S^{n-1}$ is such that the interior of $K:=\conv(A)$ contains the origin. Then the covering radius of $A$ equals $\arccos((\max\{\|x\|:x\in \ext(K^\circ)\})^{-1})$.
\end{lemma}
\begin{proof}
	Since $K^\circ$ is a convex polytope and $x\mapsto \|x\|$ is a convex function, by Krein-Milman theorem, $\max\{\|x\|:x\in \ext(K^\circ)\} = \max\{\|x\|:x\in K^\circ\}=:\lambda$. A point $x\in\S^{n-1}$ is not covered by the union of spherical balls of radii $r$ centered at the points of $A$ if and only if $\langle x,y \rangle < \cos r$ for any $y\in A$, i.e. $(\cos r)^{-1}x$ lies in the interior of $K^\circ$. The covering radius of $A$ is then
	$
		\sup\{r>0: (\cos r)^{-1}\S^{n-1} \cap K^\circ \ne \emptyset \}=\lambda,
	$
	and the claim of the lemma follows.
\end{proof}

Under the hypothesis of the lemma, $K^\circ$ is a convex polytope given as the intersection of half-spaces. Therefore, the covering radius of $A$ can be efficiently computed after the half-space representation of $K^\circ$ is converted into the vertex representation. A function performing such a conversion is readily available in most softwares for mathematical computations, e.g. MatLab or SageMath. 

If $A$ possesses certain symmetries, then it may be possible to restrict the computations only to a certain part of the polytope. By $O(n)$ we denote the group of distance-preserving transformations of $\E^n$ that preserve origin. For $\T\subset O(n)$, the notation $\langle \T\rangle$ stands for the subgroup of $O(n)$ generated by $\T$.

\begin{lemma}
	Suppose $\T\subset O(n)$ is finite and $C\subset\E^n$ is such that $\bigcup\{T(C):T\in\langle \T\rangle \}=\E^n$. Further, suppose that for a finite subset $A\subset\S^{n-1}$ the interior of $K:=\conv(A)$ contains the origin and $T(A)=A$ for any $T\in\T$. Then the covering radius of $A$ equals $\arccos((\max\{\|x\|:x\in C\cap \ext(K^\circ)\})^{-1})$.
\end{lemma}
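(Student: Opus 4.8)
The plan is to reduce to \cref{lem:cov radius} by showing that the constraint $x \in C$ does not lose any of the maximizers of $\|x\|$ over $\ext(K^\circ)$. First I would record the key structural fact: since $T(A)=A$ for every $T \in \T$, and the polar operation is equivariant under $O(n)$ (because each $T \in O(n)$ preserves both the origin and the scalar product, so $T(K^\circ) = (T(K))^\circ$), we get $T(K^\circ)=K^\circ$ for every $T \in \T$, and hence $S(K^\circ)=K^\circ$ for every $S$ in the group $G:=\langle \T\rangle$. Consequently the vertex set $\ext(K^\circ)$ is invariant under $G$, and since every $S \in G \subset O(n)$ is an isometry fixing the origin, $\|S(x)\| = \|x\|$ for all $x$.

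Next I would invoke \cref{lem:cov radius}, whose hypotheses are exactly met here, to write the covering radius as $\arccos(\lambda^{-1})$ with $\lambda := \max\{\|x\| : x \in \ext(K^\circ)\}$. It remains to prove the identity
\[
	\max\{\|x\| : x \in \ext(K^\circ)\} \;=\; \max\{\|x\| : x \in C \cap \ext(K^\circ)\}.
\]
The inequality $\ge$ is immediate since the right-hand maximum is over a subset. For $\le$, pick any $x^\ast \in \ext(K^\circ)$ attaining $\lambda$. By the covering hypothesis on $C$, there is some $S \in G$ with $x^\ast \in S(C)$, equivalently $S^{-1}(x^\ast) \in C$. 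By the invariance established above, $S^{-1}(x^\ast) \in \ext(K^\circ)$, and by isometry $\|S^{-1}(x^\ast)\| = \|x^\ast\| = \lambda$. Thus $S^{-1}(x^\ast) \in C \cap \ext(K^\circ)$ realizes the value $\lambda$, giving $\le$. Combining with \cref{lem:cov radius} finishes the proof.

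The only genuinely delicate point is the equivariance of polarity under $O(n)$, i.e. $(T(K))^\circ = T(K^\circ)$; this is a one-line check from $\langle T(x), T(y)\rangle = \langle x,y\rangle$ and the invertibility of $T$, but it is the hinge on which everything turns, so I would state it explicitly rather than leave it implicit. Everything else—Krein–Milman is already absorbed into \cref{lem:cov radius}, and the set-cover argument is purely combinatorial—is routine. One should also note in passing that $\max\{\|x\|:x\in C\cap\ext(K^\circ)\}$ is well-defined (the set is nonempty) precisely because the argument above produces at least one maximizer inside it.
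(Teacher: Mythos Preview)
Your proof is correct and follows essentially the same route as the paper: reduce to \cref{lem:cov radius}, observe that $K^\circ$ and hence $\ext(K^\circ)$ are $\langle\T\rangle$-invariant, pick a maximizer $x^\ast$ of $\|\cdot\|$ over $\ext(K^\circ)$, and use the covering hypothesis to find $S\in\langle\T\rangle$ with $S^{-1}(x^\ast)\in C\cap\ext(K^\circ)$ realizing the same norm. Your version is slightly more explicit about the equivariance $(T(K))^\circ=T(K^\circ)$ and the nonemptiness of $C\cap\ext(K^\circ)$, but the argument is otherwise identical.
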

\begin{proof}
	Clearly, under the hypotheses of the lemma, $T(K^\circ)=K^\circ$ and $T(\ext(K^\circ))=\ext(K^\circ)$ for any $T\in \langle \T\rangle$. By \cref{lem:cov radius}, the covering radius of $A$ equals $\arccos((\max\{\|x\|:x\in \ext(K^\circ)\})^{-1})$. Suppose the maximum is attained at a point $x_0 \in \ext(K^\circ)$. Since $\bigcup\{T(C):T\in\langle \T\rangle \}=\E^n$, there exists $T\in \langle \T\rangle$ such that $x_0\in T(C)$, then $T^{-1}x_0\in C\cap \ext(K^\circ)$. We have $\|x_0\|=\|T^{-1}x_0\|\le \max\{\|x\|:x\in C\cap \ext(K^\circ)\} \le \max\{\|x\|:x\in \ext(K^\circ)\}= \|x_0\|$, and the lemma is proved.
\end{proof}

For example, if $A$ is origin-symmetric and invariant under permutations of coordinates, we can take $C=\{x\in\E^n: x_1\ge0, x_1\ge x_2\ge\dots\ge x_n \}$, which is applicable to all the cases in the next section. As $C$ is given as intersection of half-spaces, $C\cap K^\circ$ is a convex polytope, and $\max\{\|x\|:x\in C\cap \ext(K^\circ)\}=\max\{\|x\|:x\in \ext(C\cap K^\circ)\}$. 


\section{Proof of~\cref{thm:main}}\label{sec:proof}


We write $(x_1^{n_1},x_2^{n_2},\dots)$ to denote a vector that has some $n_i$ coordinates equal to $x_i$; for example, $(2,2,-1,0,\dots,0)\in\E^n$ can be written as $(2^2,-1,0^{n-3})$. For each $n$, $5\le n\le 15$, we construct an appropriate system of points $A\subset\S^{n-1}$ so that \cref{lem:reduction} is applicable. The set $A$ is obtained by taking all possible permutations of coordinates and symmetries about the origin of a certain smaller generating set of vectors. For convenience, we list the vectors of the generating set on a sphere which is not necessarily unit; the generating set can be normalized using a scalar multiple. Covering radii are found on a computer using the techniques of \cref{sec:numerical computation} and the code supplied in~\cite{self-pre-print}*{Appendix}. The constructions and the results are given in the table below ($|A|$ denotes the cardinality of $A$), where the decimal approximations are stated with the precision of 5 digits, while actual computational precision is double floating point arithmetic. 

For the dimensions $5\le n\le 10$, we computed the precise values of the covering radius by using exact computations in the field of rational numbers or in appropriate quadratic fields. Note that for $n=5$ the covering radius is equal to the one required by \cref{lem:cov radius}. All coordinates in our constructions are given by algebraic numbers, so with appropriate computational resources, the covering radii can be computed precisely. 

The running time of our script is well under a minute on a modern personal computer even for the case $n=15$ if floating point arithmetics is used. Getting precise results through symbolic computations takes longer for $n=9$ (five minutes) and $n=10$ (an hour).

\bgroup
\footnotesize
\def\arraystretch{1.6}
\begin{center}
\begin{tabular}{|c|c|c|c|c|}
	\hline
	$n$ & vectors of generating set & $|A|$ & covering radius & $\arccos\sqrt{\tfrac{n-1}{2n}}\approx$ \\
	\hline
	$5$ & $(2^2,0^3)$, $(2,-2,0^3)$, $(2,(-1)^4)$ & $30$ & $\arccos\sqrt{\tfrac25}\approx 0.88608$ & $0.88608$ \\
	\hline
	$6$ & $(\sqrt{6},0^5)$, $(1^6)$, $(1^4,(-1)^2)$ & $44$ & $\arccos\frac23\approx0.84107$ & $0.86912$ \\
	\hline
	$7$ &  
	 \begin{tabular}{c}$(17^2,(-1)^5)$, $(13^2,(-7)^5)$,
	 	$(23,(-3)^6)$, $(17,7^6)$\end{tabular} 
	& $112$ & $\arccos\tfrac{593}{55\sqrt{265}}\approx0.84688$ & $0.85707$ \\
	\hline	
	$8$ & \begin{tabular}{c} $(2^2,0^6)$, $(2,-2,0^6)$,
		$(1^8)$, $(1^6,(-1)^2)$, $(1^4,(-1)^4)$ 
	\end{tabular} & $240$ & $\frac{\pi}{4}\approx0.78540$ & $0.84806$ \\
	\hline
	$9$ & \begin{tabular}{c} $(3^2,0^7)$, $(3,-3,0^7)$,\\
		$((\sqrt{2})^9)$, $((\sqrt{2})^7,(-\sqrt{2})^2)$, $((\sqrt{2})^5,(-\sqrt{2})^4)$ 
	\end{tabular} & $470$ & $\arccos\frac1{\sqrt{19-12\sqrt{2}}}\approx0.79265$ & $0.84107$ \\
	\hline
	$10$ & \begin{tabular}{c} $((\sqrt{10})^2,0^8)$, $(\sqrt{10},-\sqrt{10},0^8)$,\\
		$((\sqrt{2})^{10})$, $((\sqrt{2})^8,(-\sqrt{2})^2)$, $((\sqrt{2})^6,(-\sqrt{2})^4)$ 
	\end{tabular} & $692$ & $\arccos\frac1{\sqrt{20-8\sqrt{5}}}\approx0.81180$ & $0.83548$ \\
	\hline
	$11$ & \begin{tabular}{c} $(\sqrt{33},0^{10})$, $((\sqrt{11})^3,0^8)$,$((\sqrt{11})^2,-\sqrt{11},0^8)$,\\
		$((\sqrt{3})^{10},-\sqrt{3})$, $((\sqrt{3})^7,(-\sqrt{3})^4)$ 
	\end{tabular}  & $2024$ & $\approx0.82071$ & $0.83092$ \\
	\hline
	$12$ & \begin{tabular}{c} $(2\sqrt{3},0^{11})$, $(2^3,0^9)$,$(2^2,-2,0^9)$,\\
		$(1^{12})$, $(1^{10},(-1)^{2})$, $(1^8,(-1)^4)$, $(1^6,(-1)^6)$  
	\end{tabular} & $3832$ & $\approx0.78540$ & $0.82711$ \\
	\hline
	$13$ &  \begin{tabular}{c} $(\sqrt{39},0^{12})$, $((\sqrt{13})^3,0^{10})$,$((\sqrt{13})^2,-\sqrt{13},0^{10})$,\\
		$((\sqrt{3})^{13})$, $((\sqrt{3})^{12},-\sqrt{3})$, $((\sqrt{3})^{11},(-\sqrt{3})^2)$, \\ $((\sqrt{3})^{10},(-\sqrt{3})^3)$, $((\sqrt{3})^{9},(-\sqrt{3})^4)$, $((\sqrt{3})^{8},(-\sqrt{3})^5)$  
	\end{tabular} & $7074$ & $\approx0.79098$ & $0.82390$ \\
	\hline
	$14$ & \begin{tabular}{c} $(\sqrt{42},0^{13})$, $((\sqrt{14})^3,0^{11})$,$((\sqrt{14})^2,-\sqrt{14},0^{11})$,\\
		$((\sqrt{3})^{14})$, $((\sqrt{3})^{12},(-\sqrt{3})^2)$,\\ $((\sqrt{3})^{10},(-\sqrt{3})^4)$,  $((\sqrt{3})^{8},(-\sqrt{3})^6)$
	\end{tabular} & $11132$ & $\approx0.80395$ & $0.82114$ \\
	\hline
	$15$ & \begin{tabular}{c} $(2\sqrt{15},0^{14})$, $((2\sqrt{15})^2,-2\sqrt{15},0^{12})$, $((\sqrt{15})^4,0^{11})$, \\
		$(2^{15})$, $(2^{14},-2)$, $(2^{12},(-2)^3)$, $(2^{9},(-2)^6)$ \end{tabular}  & $16442$ & $\approx0.81793$ & $0.81876$ \\
\hline
\end{tabular}
\normalsize
\vskip2mm
Table~1. Constructions and covering radii
\end{center}
\egroup
%
%
%
%

\begin{bibsection}
\begin{biblist}

\bib{Be-Kh}{article}{
	author={Bezdek, K\'{a}roly},
	author={Khan, Muhammad A.},
	title={The geometry of homothetic covering and illumination},
	conference={
		title={Discrete geometry and symmetry},
	},
	book={
		series={Springer Proc. Math. Stat.},
		volume={234},
		publisher={Springer, Cham},
	},
	date={2018},
	pages={1--30},
}

	\bib{Be-Ki}{article}{
		author={Bezdek, K.},
		author={Kiss, Gy.},
		title={On the X-ray number of almost smooth convex bodies and of convex
			bodies of constant width},
		journal={Canad. Math. Bull.},
		volume={52},
		date={2009},
		number={3},
		pages={342--348},
	}

\bib{self-pre-print}{article}{
	author={A. Bondarenko},
	author={A. Prymak},
	author={D. Radchenko},
	title={Spherical coverings and X-raying convex bodies of constant width},
	date={Dec. 2020},
	eprint={https://arxiv.org/abs/2011.06398v2}
}

\bib{Bo-Wi}{article}{
	author={B\"{o}r\"{o}czky, K\'{a}roly, Jr.},
	author={Wintsche, Gergely},
	title={Covering the sphere by equal spherical balls},
	conference={
		title={Discrete and computational geometry},
	},
	book={
		series={Algorithms Combin.},
		volume={25},
		publisher={Springer, Berlin},
	},
	date={2003},
	pages={235--251},
}

\bib{Co-Sl}{book}{
	author={Conway, J. H.},
	author={Sloane, N. J. A.},
	title={Sphere packings, lattices and groups},
	series={Grundlehren der Mathematischen Wissenschaften [Fundamental
		Principles of Mathematical Sciences]},
	volume={290},
	edition={3},
	publisher={Springer-Verlag, New York},
	date={1999},
	pages={lxxiv+703},
}

\bib{Da-Pr}{article}{
	author={F. Dai},	
	author={A. Prymak},	
	title={On directional Whitney inequality},
	journal={Canad. J. Math.},
	year={2021},
	pages={1--25},
	doi={10.4153/S0008414X21000110}
}

\bib{Da}{article}{
	author={Danzer, L.},
	author={Gr\"{u}nbaum, B.},
	title={\"{U}ber zwei Probleme bez\"{u}glich konvexer K\"{o}rper von P. Erd\H{o}s und von
		V. L. Klee},
	language={German},
	journal={Math. Z.},
	volume={79},
	date={1962},
	pages={95--99},
}

\bib{sagemath}{manual}{
	author={Developers, The~Sage},
	title={{S}agemath, the {S}age {M}athematics {S}oftware {S}ystem
		({V}ersion 9.2)},
	date={2020},
	note={{\tt https://www.sagemath.org}},
}

\bib{Du}{article}{
	author={Dumer, Ilya},
	title={Covering spheres with spheres},
	journal={Discrete Comput. Geom.},
	volume={38},
	date={2007},
	number={4},
	pages={665--679},
}

	\bib{HSTV}{article}{
author={Han Huang},	
author={Boaz A. Slomka},	
author={Tomasz Tkocz},	
author={Beatrice-Helen Vritsiou},	
title={Improved bounds for Hadwiger's covering problem via thin-shell estimates},
eprint={http://arxiv.org/abs/1811.12548}
}

\bib{constwidthbook}{book}{
	title={Bodies of Constant Width: An Introduction to Convex Geometry with Applications},
	author={Horst Martini},
	author={Luis Montejano},
	author={D\'eborah Oliveros},
	pages={486},
	date={2019},
	publisher={Springer International Publishing}
}

\bib{Na}{article}{
	author={Nasz\'{o}di, M\'{a}rton},
	title={On some covering problems in geometry},
	journal={Proc. Amer. Math. Soc.},
	volume={144},
	date={2016},
	number={8},
	pages={3555--3562},
}

\bib{Pr-Sh}{article}{
	author={Prymak, A.},
	author={Shepelska, V.},
	title={On the Hadwiger covering problem in low dimensions},
	journal={J. Geom.},
	volume={111},
	date={2020},
	number={3},
	pages={42},
}

\bib{Ro1}{article}{
	author={Rogers, C. A.},
	title={A note on coverings},
	journal={Mathematika},
	volume={4},
	date={1957},
	pages={1--6},
}

\bib{Ro2}{article}{
	author={Rogers, C. A.},
	title={Covering a sphere with spheres},
	journal={Mathematika},
	volume={10},
	date={1963},
	pages={157--164},
}

	\bib{Sc}{article}{
		author={Schramm, Oded},
		title={Illuminating sets of constant width},
		journal={Mathematika},
		volume={35},
		date={1988},
		number={2},
		pages={180--189},
	}

\bib{Zong}{article}{
	author={Zong, Chuanming},
	title={A quantitative program for Hadwiger's covering conjecture},
	journal={Sci. China Math.},
	volume={53},
	date={2010},
	number={9},
	pages={2551--2560},
}

\end{biblist}
\end{bibsection}

\end{document}